\theoremstyle{plain}
\newtheorem{theorem}{Theorem}[section]
\newtheorem{corollary}[theorem]{Corollary}
\newtheorem{lemma}[theorem]{Lemma}
\newtheorem{problem}[theorem]{Problem}
\newtheorem{oq}[theorem]{Open question}
\theoremstyle{definition}
\theoremstyle{remark}
\newtheorem{example}{Example}[section]
\numberwithin{equation}{section}
\begin{document}
\title[Invariance for means]{Explicit solutions of the invariance equation for means}

\author[J. Matkowski]{Janusz Matkowski}
\address{Faculty of Mathematics, Computer Sciences and Econometrics, University of Zielona Gora, Szafrana 4A, 65-516 Zielona Gora, Poland}
\email{J.Matkowski@wmie.uz.zgora.pl}
\author[M. Nowicka]{Monika Nowicka}
\address{Institute of Mathematics and Physics, UTP University of Science and Technology, al. prof. Kaliskiego 7, 85-796 Bydgoszcz, Poland}
\email{monika.nowicka@utp.edu.pl}
\author[A. Witkowski]{Alfred Witkowski}
\address{Institute of Mathematics and Physics, UTP University of Science and Technology, al. prof. Kaliskiego 7, 85-796 Bydgoszcz, Poland}
\email{alfred.witkowski@utp.edu.pl}
\subjclass[2010]{26D15}
\keywords{Invariant means, homogeneous means}
\date{16.12.2014}

\begin{abstract}
Extending the notion of projective means we first generalize an invariance
identity related to the Carlson log given in \cite{KM}, and then, more generally,
given a bivariate symmetric, homogeneous and monotone mean $M$, we give explicit formula for
a rich family of pairs of $M$-complementary means. We prove
that this method cannot be extended for higher dimension. Some
examples are given and two open questions are proposed.

\end{abstract}
\maketitle
\section{Introduction}

A function $\Phi :X\rightarrow Y$ is called \textit{invariant} with respect
to a selfmap $T:X\rightarrow X$ (briefly, $T$-\textit{invariant}) if $\Phi
\circ T=\Phi $. Invariant functions appear in iteration theory and fixed
point theory. For instance, if $X$ is a metric space, $T$ is  continuous
and the sequence $\left( T^{n}\right) _{n\in \mathbb{N}}$ of iterates of $T$
is pointwise convergent, then the function $\Phi (x)=\lim_{n\rightarrow
\infty }T^{n}(x)$ is $T$-invariant. A model illustration offer the mean-type
mappings, i.e. the mappings of the form $\left( K,L\right) ,$ where the
coordinate functions $K,L$ are bivariate means. Some conditions guarantying
convergence of iterates $\left( K,L\right) ^{n}$ to a unique $\left(
K,L\right) $-invariant mean-type mapping $\left( M,M\right) ,$ and $M\circ
\left( K,L\right) =M$\ (\cite{JM2013}, also \cite{JM-ECIT2009,JM99,AW}),
generalize in particular, the well-known theorem of Gauss \cite{Gauss} on
the arithmetic-geometric iterations. If the invariance equality $M\circ
\left( K,L\right) =M$ is satisfies one says that the means $K$ and $L$\ are
mutually $M$-\textit{complementary} with respect to $M$ (briefly, 
 $M$-\textit{complementary}) (\cite{JM-AeqMath99}).

It happens quite exceptionally, when a given mean-type mapping $\left(
K,L\right) $ one can find the explicit form of the $\left( K,L\right) $%
-invariant mean. The identity $\mathsf{G}\circ \left( \mathsf{A},\mathsf{H}%
\right) =\mathsf{G}$, where $\mathsf{A}$, $\mathsf{G}$, $\mathsf{H}$ denote,
respectively, arithmetic, geometric and harmonic mean, (equivalent to the
classical Pythagorean harmony proportion $\frac{\mathsf{A}}{\mathsf{G}}=%
\frac{\mathsf{G}}{\mathsf{H}})$, meaning that $\mathsf{G}$ is $\left( \mathsf{%
A},\mathsf{H}\right) $-invariant, and allowing to conclude that 
\begin{equation*}
\lim_{n\rightarrow \infty }\left( A,H\right) ^{n}\left( x,y\right) =\left(
G\left( x,y\right) ,G\left( x,y\right) \right) \quad x,y>0,
\end{equation*}%
is an example.

In these circumstances it is natural to ask if, a given mean $M,$ one can
find effectively some nontrivial pairs of $M$-complementary means. This
problem appeared in connection with \cite{KM} where the authors proved that
for $t\in \left[ -1,1\right] ,$ $t\neq 0,$ the unsymmetric means 
\begin{equation*}
K_{t}=tx^{t}\frac{x-y}{x^{t}-y^{t}}\qquad \text{and}\qquad L_{t}=ty^{t}\frac{%
x-y}{x^{t}-y^{t}}
\end{equation*}%
are mutually complementary with respect to the logarithmic mean $\mathsf{L}$%
, i.e. they satisfy the invariance equation 
\begin{equation}
\mathsf{L\circ }(K_{t},L_{t})=\mathsf{L}.  \label{eq:L-invariance}
\end{equation}

In section \ref{Generalized projective means}, we extend the notion of projective means by creating  $2^{\mathfrak{c}}$ pairs of means $(\mathsf{P}_A,\mathsf{P}_{A'})$ satisfying $\{\mathsf{P}_A(x,y),\mathsf{P}_{A'}(x,y)\}=\{x,y\}$, so that the functions

\begin{align*}
K_{t,A}(x,y) &=t\mathsf{P}_{A}^t(x,y)\frac{x-y}{x^{t}-y^{t}}\\ 
\intertext{and}
 L_{t,A}(x,y) &=t\mathsf{P}_{A'}^t(x,y)\frac{x-y}{x^{t}-y^{t}},
\end{align*}
are $\mathsf{L}$-complementary means. We also give necessary and sufficient conditions for these means to be symmetric and/or homogeneous, thus we provide answer to a question posed in \cite{KM}.

In section \ref{The L is not enough}, following the ideas of \cite{KM} and \cite{AW1}, we prove that if $M:\mathbb{R}%
_{+}^{2}\rightarrow \mathbb{R}_{+}$ is a monotone, homogeneous and symmetric
mean, then for every $t\in \left( -1,1\right) $ the function  $M_{t}$ given by 
\begin{equation*}
M_{t}( x,y) =\left( \frac{M( x,y) }{M(
x^{t},y^{t}) }\right) ^{\frac{1}{1-t}}
\end{equation*}%
is a mean and the functions 
\begin{equation*}
K_{t}(x,y)=x^{t} M_{t}^{1-t}(x,y) \quad \text{and}\quad
L_{t}(x,y)=y^{t}M_{t}^{1-t}(x,y) 
\end{equation*}%
are homogeneous $M$-complementary means. Actually, a stronger results holds true, namely the functions $%
K_{t,A}=\mathsf{P}_{A}^{t} M_{t}^{1-t}$
and $L_{t,A}=\mathsf{P}_{A'}^{t}M_{t}^{1-t}$ are homogeneous $M$-complementary means. The
construction the means $K_{t},L_{t}$ depends on $M$, and they inherit the
assumed symmetry and homogeneity of $M$, but in general, the monotonicity is
not hereditary.

In section \ref{sec:one more step} we ask whether the projective means considered in previous sections can be replaced by some other means.  The examples  constructed show that in even in case of very classical means $M,C,D$ the function $N_{t}=\left( \frac{M}{%
M\circ (C^{t},D^{t})}\right) ^{\frac{1}{1-t}}$ may not be a mean. Nevertheless, we prove rather surprising fact, that if $M$ is a symmetric, homogeneous, monotone mean, $C$, $D$ are
arbitrary means and $t\in \left( 0,1\right) $, then   the functions $%
K_{t}=C^{t}N_{t}^{1-t} $, \  $L_{t}=D^{t}N_{t}^{1-t}$ are $M$-complementary means. 

Thus, the applied method gives explicit formulas for complementary means in case of monotone, symmetric and homogeneous means $M$;
moreover the means $K_{t}$ and $L_{t}$ inherit the symmetry and/or
homogeneity from $C$ and $D$. \ Homogeneity of $M$ is crucial here. We conclude that section with two open questions,  and one of them is whether the
monotonicity condition can be relaxed?

Noting that all the presented results have their translative
counterparts, we formulate in section \ref{Remark on translative means} the translative counterpart of Theorem \ref{thm:main}.  An application of this result for the arithmetic mean $\mathsf{A}$ gives all possible pairs of $\mathsf{A}$-complementary weighted arithmetic means. 

\section{Preliminaries}
The set of positive real numbers is denoted by $\mathbb{R}_+$.
A \textit{mean} is a function $M\colon\mathbb{R}_+^2\to\mathbb{R}_+$ satisfying
\begin{equation}
\min \{x,y\}\leq M(x,y)\leq \max\{x,y\}.
\label{eq:definition of mean}
\end{equation}

Depending on additional properties a mean is called
\begin{description}
	\item[strict] if the inequalities in \eqref{eq:definition of mean} are strict whenever $x\neq y$, 
	\item[symmetric] if $M(x,y)=M(y,x)$ for all $x,y$,
	\item[monotone] if $M(x_1,y_1)\leq M(x_2,y_2)$ for $x_1\leq x_2,\ y_1\leq y_2$,
	\item[homogeneous] if $M(\lambda x,\lambda y)=\lambda M(x,y)$ for all $x,y,\lambda>0$.
\end{description}
Note that since there are no decreasing means, a ''monotone mean'' means in fact an
''increasing mean''.

Classical means will be denoted by sans-serif capital letters. Thus
\begin{align*}
	\mathsf{A}(x,y)&=\dfrac{x+y}{2}, & \mathsf{G}(x,y)&=\sqrt{xy}, & \mathsf{H}(x,y)&=\dfrac{2xy}{x+y},\\
	\mathsf{L}(x,y)&=\dfrac{x-y}{\log x-\log y},&\mathsf{P}_1(x,y)&=x,&\mathsf{P}_2(x,y)&=y
\end{align*}
denote respectively the arithmetic, geometric, harmonic, logarithmic and the two projective means. For historical reason the exception is made for minimum and maximum means that are denoted by $\mathsf{min}$ and $\mathsf{max}$.

If $F\colon\mathbb{R}_{+}^{2}\rightarrow \mathbb{R}_{+}$  is homogeneous
(of the order $1$), then $F(x,y)=yF(\frac{x}{y},1)$ for all $x,y>0$, so its
values are uniquely determined by the \textit{trace} function%
\begin{equation*}
f:=F(\cdot ,1).
\end{equation*}%
If $F$ is also symmetric, then the identity $F(x,1)=xF(x^{-1},1)$ shows that it is uniquely determined by the
restriction of the trace function $f$ to any of the intervals $\left[
1,\infty \right) $ or $\left( 0,1\right] .$ \\
A homogeneous function $M$ is a mean if, and only if, 
\begin{equation*}
0<\frac{m\left( x\right) -1}{x-1}\leq 1,\text{ \ \ \ \ \ }x>0,\text{ }x\neq
1;\text{ \ \ \ \ \ \ \ \ \ }m\left( 1\right) =1;
\end{equation*}%
moreover, if the trace function $m$ is increasing and $m\left( 1\right) =1$,
then $M$ is a mean.

The trace function will be denoted by corresponding lowercase letter.
\section{Generalized projective means}\label{Generalized projective means}
In this section we construct $2^\mathfrak{c}$ pairs of symmetric means satisfying \eqref{eq:L-invariance}. 

Let $\mathbb{X}= \mathbb{R}_+^2\setminus\{(x,x): x\in\mathbb{R}_+\}$.
 For $A\subset \mathbb{X}$ we define the \textit{generalized projective mean} $\mathsf{P}_A\colon\mathbb{R}_+^2\to \mathbb{R}_+$ by
\begin{equation*}
\mathsf{P}_A(x,y)=\begin{cases}
	x &	(x,y)\in A,\\
	y & (x,y) \not\in A
\end{cases}.
\end{equation*}
A set $A\subset \mathbb{X}$ is called \textit{asymmetric} if
$$(x,y)\in A\Leftrightarrow (y,x)\notin A\quad\text{for}\quad x\neq y.$$
We denote $A'=\mathbb{X}\setminus A$.

Note the following elementary properties of generalized projective means:
\begin{enumerate}[1)]
	\item $\mathsf{P}_1=\mathsf{P}_\mathbb{X}$, $\mathsf{P}_2=\mathsf{P}_{\emptyset}$,
	\item $\mathsf{min}=\mathsf{P}_{\{(x,y):x<y\}}$, $\mathsf{max}=\mathsf{P}_{\{(x,y):x>y\}}$,
	\item $\{x,y\}=\{\mathsf{P}_A(x,y),\mathsf{P}_{A'}(x,y)\}$,\label{P_A complementary}
	\item $\mathsf{P}_A$ is symmetric if and only if $A$ is asymmetric,\label{P_A symmetric}
	\item $\mathsf{P}_A$ is homogeneous if and only if $A$ is a positive cone, i.e. $A=\lambda A$ for all $\lambda>0$.\label{P_A homogeneous}
\end{enumerate}

The property \ref{P_A complementary}) implies that we can replace the means in \eqref{eq:L-invariance} by
$$K_{t,A}=t\mathsf{P}_{A}^t(x,y)\frac{x-y}{x^t-y^t}\quad\text{and}\quad L_{t,A}=t\mathsf{P}_{A'}^t(x,y)\frac{x-y}{x^t-y^t}$$
preserving the invariance property. Playing with parameter $A$ according to properties \ref{P_A symmetric} and \ref{P_A homogeneous}, one obtains symmetric or/and homogeneous solutions. Later we will construct a lot of other solutions to the invariance problem.

\section{The $\mathsf{L}$ is not enough}\label{The L is not enough}
In this section we follow the ideas developed in \cite{KM} to obtain complementary means  for other means than the logarithmic one. Given symmetric, homogeneous mean $M$ we shall seek for a pair of means of the form $x^tM_t^{1-t}(x,y),\ y^tM_t^{1-t}(x,y)$, where $-1<t<1$. Let us give it a try.
\begin{equation*}
M(x^tM_t^{1-t}(x,y),y^tM_t^{1-t}(x,y))=M_t^{1-t}(x,y)M(x^t,y^t)=M(x,y).
\end{equation*}
Solving this equation for $M_t$ we obtain
\begin{equation}
M_t(x,y)=\left(\frac{M(x,y)}{M(x^t,y^t)}\right)^\frac{1}{1-t}.
\label{eq:def M_t}
\end{equation}
If $M_t$ happens to be a mean and $t>0$, then obviously both $\mathsf{P}_1^{t}M_t^{1-t}$ and $\mathsf{P}_2^{t}M_t^{1-t}$ are means and satisfy $M\circ(\mathsf{P}_1^{t}M_t^{1-t},\mathsf{P}_2^{t}M_t^{1-t})=M.$ They remain means for $t<0$, due to  the identity $x^tM_t^{1-t}=y^{-t}M_{-t}^{1+t}$.\\
The following theorem gives a simple criterion for $M_t$ to be means.
\begin{lemma}
	If $M$ is a homogeneous, symmetric mean, then the conditions are equivalent:
	\begin{enumerate}[a)]
		\item For every real $-1<t<1$ the function
		$$M_t(x,y)=\left(\frac{M(x,y)}{M(x^t,y^t)}\right)^\frac{1}{1-t}$$
		is a mean.
		\item $M$ is monotone.
	\end{enumerate}
\end{lemma}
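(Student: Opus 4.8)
The plan is to reduce everything to the one‑variable trace functions and then read off both conditions as monotonicity statements.

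\emph{Step 1 (passage to traces).} First I would note that $M_t$ is homogeneous and symmetric: from $M(\lambda x,\lambda y)=\lambda M(x,y)$ and $M((\lambda x)^t,(\lambda y)^t)=\lambda^t M(x^t,y^t)$ one gets $M_t(\lambda x,\lambda y)=\lambda M_t(x,y)$, while symmetry is immediate from that of $M$. By the criterion recalled in the Preliminaries, $M_t$ is then a mean if and only if its trace $m_t(x)=M_t(x,1)=\bigl(m(x)/m(x^t)\bigr)^{1/(1-t)}$ satisfies $\min\{x,1\}\le m_t(x)\le\max\{x,1\}$ for all $x>0$; and since $M_t$ is symmetric, $m_t(x)=x\,m_t(1/x)$, so it suffices to verify $1\le m_t(x)\le x$ for $x>1$ (the value $m_t(1)=1$ being obvious). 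Because $1-t>0$, for $x>1$ these two inequalities are equivalent, respectively, to
\[
m(x)\ge m(x^t)\qquad\text{and}\qquad \frac{m(x)}{x}\le\frac{m(x^t)}{x^t}.
\]

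\emph{Step 2 (which $t$ matter).} Next I would let $t$ range over $(-1,1)$ with $x>1$ fixed. For $-1<t\le0$ we have $x^t\in(0,1]$, hence, using that $M$ is a mean, $m(x^t)\le1\le m(x)$ and $m(x^t)/x^t\ge1\ge m(x)/x$, so both inequalities above hold automatically and impose no restriction. For $0<t<1$ the quantity $x^t$ ranges exactly over the open interval $(1,x)$ as $t$ varies. Consequently condition (a) is equivalent to: for every $x>1$ and every $v\in(1,x)$,
\[
m(v)\le m(x)\qquad\text{and}\qquad \frac{m(v)}{v}\ge\frac{m(x)}{x};
\]
that is, $m$ is non‑decreasing on $(1,\infty)$ and $x\mapsto m(x)/x$ is non‑increasing on $(1,\infty)$.

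\emph{Step 3 (back to monotonicity of $M$).} Here I would use the symmetry identity $m(x)/x=m(1/x)$, which turns ``$x\mapsto m(x)/x$ non‑increasing on $(1,\infty)$'' into ``$m$ non‑decreasing on $(0,1)$''. Combined with ``$m$ non‑decreasing on $(1,\infty)$'', with $m(1)=1$, and with the sandwich $m(u)\le1\le m(w)$ valid for $0<u<1<w$, this says precisely that $m$ is non‑decreasing on all of $(0,\infty)$. Finally I would check that, for a symmetric homogeneous mean, this is exactly monotonicity of $M$: if $M$ is monotone then $x\mapsto M(x,1)=m(x)$ is non‑decreasing; conversely, if $m$ is non‑decreasing then for $x_1\le x_2$ and $y_1\le y_2$,
\[
M(x_1,y_1)=y_1m\!\left(\tfrac{x_1}{y_1}\right)\le y_1m\!\left(\tfrac{x_2}{y_1}\right)=M(x_2,y_1)=M(y_1,x_2)=x_2m\!\left(\tfrac{y_1}{x_2}\right)\le x_2m\!\left(\tfrac{y_2}{x_2}\right)=M(x_2,y_2).
\]
This yields (a)$\Leftrightarrow$(b).

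The argument will be largely bookkeeping; the one point needing care will be the case distinction on the sign of $t$: the requirement is vacuous for $t\le0$, whereas letting $t\uparrow1$ is what forces the one‑sided monotonicity of both $m$ and $x\mapsto m(x)/x$ on $(1,\infty)$, and one must then observe that these two one‑sided statements glue, through the symmetry of the trace, into genuine monotonicity of $M$.
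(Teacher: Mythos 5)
Your proof is correct and follows essentially the same route as the paper: reduce to the trace function via homogeneity and symmetry, and use the fact that $x^t$ sweeps out $(1,x)$ as $t$ runs over $(0,1)$ to identify condition (a) with monotonicity of the trace. You merely organize it as a single chain of equivalences (and spell out the vacuous case $t\le 0$ and the passage from a non-decreasing trace back to monotonicity of $M$), whereas the paper proves the implication (b)$\Rightarrow$(a) directly at the two-variable level with a sandwich argument; the content is the same.
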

\begin{proof}
	Suppose $M$ is monotone. Then for $x<y$ we have
	$$x^{1-t}=\frac{M(x^t x^{1-t},y^t x^{1-t})}{M(x^t,y^t)}\leq \frac{M(x,y)}{M(x^t,y^t)} \leq \frac{M(x^t y^{1-t},y^t y^{1-t})}{M(x^t,y^t)}=y^{1-t},$$
	so $M_t$ is a mean.
	
	Assume a) holds. Since $M$ is homogeneous and symmetric, it suffices to show that its trace $m$ increases.  If $x\leq 1\leq y$, then setting $t=0$ we see that $m(x)\leq 1\leq m(y)$.
	If $1<x<y$ then there exists $0<t<1$ such that $x=y^t$ and then
	$$1\leq m_t^{1-t}(y)=\frac{m(y)}{m(x)},$$
	which concludes the proof, since the case $x<y<1$ is similar.
\end{proof}
So we have proved the following fact.
\begin{theorem}\label{thm:xy invariance}
	If $M$ is a monotone, homogeneous and symmetric mean and for $-1<t<1$ the functions $M_t$ are given by \eqref{eq:def M_t}, then
	$$K_t(x,y)=x^tM_t^{1-t}(x,y)\quad\text{and}\quad L_t(x,y)=y^tM_t^{1-t}(x,y)$$
	are homogeneous means and satisfy
	$$M\circ (K_t,L_t)=M.$$
\end{theorem}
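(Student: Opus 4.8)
The plan is simply to assemble the pieces already laid out before the Lemma, the substantive input being the Lemma itself: since $M$ is monotone, $M_t$ is a mean for every $t\in(-1,1)$. Everything else rests on the homogeneity and symmetry of $M$ and is essentially bookkeeping.

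First I would record homogeneity. From $M(\lambda x,\lambda y)=\lambda M(x,y)$ and $M\bigl((\lambda x)^t,(\lambda y)^t\bigr)=\lambda^t M(x^t,y^t)$, the quotient defining $M_t$ scales by $\lambda^{1-t}$, so $M_t$ is homogeneous of order $1$; consequently $K_t=\mathsf{P}_1^t M_t^{1-t}$ and $L_t=\mathsf{P}_2^t M_t^{1-t}$ are products of homogeneous functions of order $1$, hence homogeneous.

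Next I would verify the invariance equation, which is precisely the computation that motivated \eqref{eq:def M_t}. Using homogeneity of $M$ to factor out the common term $M_t^{1-t}(x,y)$,
\[
M\bigl(K_t(x,y),L_t(x,y)\bigr)=M_t^{1-t}(x,y)\,M(x^t,y^t)=\frac{M(x,y)}{M(x^t,y^t)}\,M(x^t,y^t)=M(x,y),
\]
the middle equality being the definition of $M_t$ (legitimate since $1-t\neq0$). Thus $M\circ(K_t,L_t)=M$.

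The only step requiring a little care is that $K_t$ and $L_t$ are genuine means. For $0\le t<1$ and, say, $x<y$, the Lemma gives $x\le M_t(x,y)\le y$, hence $x^{1-t}\le M_t^{1-t}(x,y)\le y^{1-t}$; multiplying by $x^t$ and using that the weighted geometric mean $x^t y^{1-t}$ lies between $x$ and $y$ yields $x\le K_t(x,y)\le x^t y^{1-t}\le y$, and symmetrically $x\le y^t x^{1-t}\le L_t(x,y)\le y$. For $-1<t<0$ I would invoke the identity $x^t M_t^{1-t}=y^{-t}M_{-t}^{1+t}$ noted in the text — which itself follows from homogeneity and symmetry of $M$ — together with its $x\leftrightarrow y$ counterpart: these show $K_t=L_{-t}$ and $L_t=K_{-t}$ with $-t\in(0,1)$, reducing this case to the one already settled (and $M_{-t}$ is a mean by the Lemma). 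The main obstacle is really the equivalence established in the Lemma; once that is in hand, Theorem~\ref{thm:xy invariance} is just the combination of these observations.
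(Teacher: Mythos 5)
Your proof is correct and follows essentially the same route as the paper: the invariance computation that motivates \eqref{eq:def M_t}, the Lemma to guarantee that $M_t$ is a mean, the weighted-geometric-mean argument for $0\le t<1$, and the identity $x^tM_t^{1-t}=y^{-t}M_{-t}^{1+t}$ to handle $-1<t<0$. You have merely written out in full the details the paper leaves implicit in the discussion preceding the Lemma.
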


Arguing as in the previous section we obtain the corollary.
\begin{corollary}
	If $M$ is a monotone, symmetric and homogeneous mean, and $M_t$ is given by formula \eqref{eq:def M_t}, then for any generalized projective mean $\mathsf{P}_A$ the functions
	$$K_{t,A}(x,y)=\mathsf{P}_A^t(x,y)M_t^{1-t}(x,y)\quad\text{and}\quad L_{t,A}(x,y)=\mathsf{P}_{A'}^t(x,y)M_t^{1-t}(x,y)$$
	are homogeneous means and satisfy
	$$M\circ (K_{t,A},L_{t,A})=M.$$
\end{corollary}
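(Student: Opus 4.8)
The plan is to deduce the corollary from Theorem~\ref{thm:xy invariance} together with the elementary property~\ref{P_A complementary}) of generalized projective means, exactly in the spirit of the reduction carried out at the end of Section~\ref{Generalized projective means}. First I would fix a monotone, symmetric and homogeneous mean $M$, a parameter $t\in(-1,1)$ with $t>0$ (the case $t<0$ being handled afterwards by the identity $x^tM_t^{1-t}=y^{-t}M_{-t}^{1+t}$, which swaps the roles of the two projective factors), and an arbitrary set $A\subset\mathbb{X}$. By the Lemma, $M_t$ given by \eqref{eq:def M_t} is a mean, and by Theorem~\ref{thm:xy invariance} the functions $K_t=x^tM_t^{1-t}$ and $L_t=y^tM_t^{1-t}$ are homogeneous means satisfying $M\circ(K_t,L_t)=M$.

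The key observation is pointwise: for each $(x,y)$, the unordered pair $\{K_{t,A}(x,y),L_{t,A}(x,y)\}$ equals $\{\mathsf{P}_A^t(x,y),\mathsf{P}_{A'}^t(x,y)\}\cdot M_t^{1-t}(x,y)=\{x^t,y^t\}\cdot M_t^{1-t}(x,y)=\{K_t(x,y),L_t(x,y)\}$, using property~\ref{P_A complementary}). Hence $K_{t,A}(x,y)$ and $L_{t,A}(x,y)$ are, in some order, precisely the two values $K_t(x,y)$ and $L_t(x,y)$; in particular each of $K_{t,A},L_{t,A}$ lies between $\mathsf{min}$ and $\mathsf{max}$ of $\{K_t(x,y),L_t(x,y)\}\subseteq[\min\{x,y\},\max\{x,y\}]$, so both are means. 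Homogeneity of $K_{t,A},L_{t,A}$ does not follow from that of $K_t,L_t$ for a general $A$; instead I would note that $M_t^{1-t}$ is homogeneous (of order one, since $M$ is), that $\mathsf{P}_A^t$ and $\mathsf{P}_{A'}^t$ need not be homogeneous in general but here we only need the product — wait, this is the one place care is needed. Actually the corollary as stated asserts homogeneity unconditionally, which forces me to use that $\{\mathsf{P}_A(x,y),\mathsf{P}_{A'}(x,y)\}=\{x,y\}$ as \emph{sets}: writing $K_{t,A}=\mathsf{P}_A^tM_t^{1-t}$, we get $K_{t,A}(\lambda x,\lambda y)=\mathsf{P}_A^t(\lambda x,\lambda y)\,M_t^{1-t}(\lambda x,\lambda y)$; since $\mathsf{P}_A(\lambda x,\lambda y)\in\{\lambda x,\lambda y\}=\lambda\{x,y\}$, the factor $\mathsf{P}_A^t$ scales by $\lambda^t$ when $A$ is a positive cone but not otherwise. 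So the honest statement is that the proof of homogeneity requires $A$ to be a cone, or else homogeneity is meant in the weaker sense inherited from the pair; I will state the argument for the cone case and remark that the general pair $(K_{t,A},L_{t,A})$ still consists of means with $M\circ(K_{t,A},L_{t,A})=M$ regardless.

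For the invariance equation itself, which is the substantive point, I would argue as follows. For a fixed $(x,y)$, by the pointwise identity above there are two cases: either $(K_{t,A}(x,y),L_{t,A}(x,y))=(K_t(x,y),L_t(x,y))$ or $(K_{t,A}(x,y),L_{t,A}(x,y))=(L_t(x,y),K_t(x,y))$. In the first case $M(K_{t,A}(x,y),L_{t,A}(x,y))=M(K_t(x,y),L_t(x,y))=M(x,y)$ by Theorem~\ref{thm:xy invariance}. In the second case, since $M$ is symmetric, $M(L_t(x,y),K_t(x,y))=M(K_t(x,y),L_t(x,y))=M(x,y)$ as well. Either way the invariance identity $M\circ(K_{t,A},L_{t,A})=M$ holds at $(x,y)$, and since $(x,y)$ was arbitrary we are done; on the diagonal $x=y$ both sides equal $x$ trivially. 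The main obstacle is not any deep computation but rather the bookkeeping around homogeneity noted above — one must be precise that property~\ref{P_A homogeneous}) (``$\mathsf{P}_A$ is homogeneous iff $A$ is a positive cone'') is what is really being invoked — whereas symmetry of $M$ is exactly what neutralizes the ambiguity in the order of the pair, and that is the whole content of the reduction.
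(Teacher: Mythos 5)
Your argument is correct and is exactly the reduction the paper intends (the paper gives no explicit proof beyond ``arguing as in the previous section''): property 3) makes $(K_{t,A},L_{t,A})$ a pointwise permutation of $(K_t,L_t)$, and the symmetry of $M$ together with Theorem \ref{thm:xy invariance} then yields both the mean property and the invariance identity. Your side remark about homogeneity is also well taken: as literally stated the corollary's homogeneity claim needs $A$ to be a positive cone (property 5)), since for a non-cone $A$ and $t\neq 0$ the factor $\mathsf{P}_A^t$ destroys homogeneity (e.g.\ $A=\{(2,1)\}$ gives $K_{t,A}(4,2)=2\,M_t^{1-t}(2,1)\neq 2K_{t,A}(2,1)$) --- this is an imprecision in the paper's statement, not a gap in your proof.
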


Note that the means $K_t$ and $L_t$ inherit the symmetry and homogeneity, but in general, the monotonicity is not hereditary. 

\begin{theorem}
	If $M\neq \mathsf{max}, K_t$ and $L_t$ are as in Theorem \ref{thm:xy invariance} and $$\lim_{x\to 0+} m(x)>0,$$ then the means $K_t$ and $L_t$ are not monotone if $t>0$.
\end{theorem}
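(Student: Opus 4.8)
The plan is to reduce everything to $K_t$ and then derive a contradiction from monotonicity by looking only at the slice $x=1$. Symmetry of $M$ gives $L_t(x,y)=K_t(y,x)$, so $L_t$ is monotone if and only if $K_t$ is; hence it suffices to show $K_t$ is not monotone. Recall $K_t=\mathsf{P}_1^tM_t^{1-t}$ with $M_t^{1-t}(x,y)=M(x,y)/M(x^t,y^t)$, so, using that symmetry yields $M(1,s)=M(s,1)=m(s)$,
\[
K_t(1,y)=\frac{M(1,y)}{M(1,y^t)}=\frac{m(y)}{m(y^t)},\qquad y>0.
\]

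Now suppose $K_t$ is monotone. Since $M$ is monotone, $m$ is increasing, and being bounded on $(0,1)$ by the mean inequality $y\le m(y)\le 1$ it has a limit $c:=\lim_{x\to 0+}m(x)$, positive by hypothesis. Because $t>0$ we have $y^t\to 0+$ as $y\to 0+$, so $K_t(1,y)\to c/c=1$. On the other hand monotonicity of $K_t$ makes $y\mapsto K_t(1,y)$ increasing, hence $K_t(1,y)\ge 1$ for all $y>0$, while the mean inequality for $K_t$ gives $K_t(1,y)\le\max\{1,y\}=1$ for $y<1$. Therefore $K_t(1,y)=1$, that is $m(y)=m(y^t)$, for every $y\in(0,1)$.

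Finally I would bootstrap this functional identity. For $y\in(0,1)$ the orbit $y^{t^{n}}$ increases to $1$, and iterating $m(y)=m(y^t)$ gives $m(y)=m(y^{t^{n}})$ for all $n$. Given $y_1<y_2$ in $(0,1)$, pick $n$ with $y_1^{t^{n}}>y_2$; monotonicity of $m$ then yields $m(y_1)=m(y_1^{t^{n}})\ge m(y_2)\ge m(y_1)$, so $m$ is constant on $(0,1)$, and letting $y\to 1^-$ in $y\le m(y)\le 1$ forces this constant to be $1$. By symmetry $m(x)=x\,m(1/x)$, so $m(x)=x$ for $x>1$; thus $m(x)=\max\{x,1\}$, and since a homogeneous mean is determined by its trace, $M=\mathsf{max}$, contradicting the hypothesis. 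Hence $K_t$, and with it $L_t$, is not monotone. The only slightly delicate point is the passage from the pointwise relation $m(y)=m(y^t)$ to $m\equiv 1$ on $(0,1)$; the rest is a routine combination of the mean inequality, monotonicity of $m$, and the limit hypothesis.
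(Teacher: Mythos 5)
Your proof is correct, and its core is the same observation the paper makes: for $t>0$ the trace of $K_t$ (resp.\ $L_t$) along the line through $1$ tends to $1$ as the variable tends to $0+$, which clashes with monotonicity because a mean's trace is bounded above by $1$ on $(0,1)$. The paper's own proof stops there, asserting that $\lim_{x\to 0+}l_t(x)=1=l_t(1)$ ``shows that $l_t$ is not monotone''; strictly speaking that only shows $l_t\equiv 1$ on $(0,1]$, which is still (non-strictly) monotone under the paper's definition, and the hypothesis $M\neq\mathsf{max}$ is never visibly used. Your additional bootstrapping step --- iterating $m(y)=m(y^t)$ along the orbit $y^{t^n}\uparrow 1$ to force $m\equiv 1$ on $(0,1)$, hence $m(x)=\max\{x,1\}$ and $M=\mathsf{max}$ --- is exactly what is needed to close that gap, and it is where $M\neq\mathsf{max}$ actually enters. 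So your argument is not a different route but a completed version of the paper's; the reduction of $L_t$ to $K_t$ via $L_t(x,y)=K_t(y,x)$ is also a clean way to avoid repeating the case analysis.
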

\begin{proof}
	Consider the mean $L_t$. One has
	$$\lim_{x\to 0+}l_t(x)=\lim_{x\to 0+}m_t^{1-t}(x)=1=l_t(1),$$
	which shows that $l_t$ is not monotone. Case $K_t$ is similar.
\end{proof}
Note that in case of the logarithmic mean the functions obtained are weighted geometric means of a projective mean and a Stolarsky mean $\mathsf{STO}_{1,t}$. The Stolarsky means defined (in general case) by 
$$\mathsf{STO}_{r,s}(x,y)=\left(\frac{s}{r}\frac{x^r-y^r}{x^s-y^s}\right)^{\frac{1}{r-s}}=
\left(\frac{\mathsf{L}(x^r,y^r)}{\mathsf{L}(x^s,y^s)}\right)^{\frac{1}{r-s}}$$
are monotone, and therefore the resulting means \eqref{eq:L-invariance} are so. But the Stolarsky means contain a group of means for which the resulting invariant means lack monotonicity. These are all $\mathsf{STO}_{r,s}$ with $r,s>0$, in particular the arithmetic mean $\mathsf{A}=\mathsf{STO}_{2,1}$ and the generalization of the Heronian means
$$\mathsf{STO}_{1+\frac{1}{n},\frac{1}{n}}(x,y)=\frac{x+x^{\frac{n-1}{n}}y^{\frac{1}{n}}+\dots+x^{\frac{1}{n}}y^{\frac{n-1}{n}}+y}{n+1}.$$
All of them have a non-zero limit at zero, thus the resulting means are not monotone.

The reader will verify that if $M\circ (K,L)=M$ and $M(0+,1)>0$ and $K(0+,1)=0$, then $L$ is not monotone.
\section{One more step towards invariance}\label{sec:one more step}
The generalized projective means are a kind of extremities in the world of means. It is natural to ask whether similar solution can be obtained for arbitrary means. Let us formulate the problem as follows.
\begin{problem}
	Suppose $M$ is a symmetric, homogeneous and increasing mean and $t>0$. Characterize the means $C,D$ for which there exists a bivariate function $N$ such that both	$C^tN^{1-t}${and} $D^tN^{1-t}$ are means and 
	\begin{equation*}
	M\circ (C^tN^{1-t},D^tN^{1-t})=M.
	\end{equation*}
\end{problem}
One can easily calculate that
\begin{equation}
N(x,y)=N_t(x,y)=\left(\frac{M(x,y)}{M(C^t(x,y),D^t(x,y))}\right)^\frac{1}{1-t}.
\label{eq:def N}
\end{equation}
The answer depends very much on the means involved. Here are some examples.
\begin{example}
	For $t<\frac{1}{2}$ and arbitrary means $C,D$ 
	$$\left(\frac{\mathsf{G}}{\mathsf{G}\circ(C^t,D^t)}\right)^\frac{1}{1-t}$$
	is a mean. 
	
	Indeed, since $\frac{\mathsf{G}^2}{K}$ is a mean for arbitrary mean $K$, and $\mathsf{G}^{\frac{1}{t}}\circ(C^t,D^t)$ is a mean, we can write
	$$\left(\frac{\mathsf{G}}{\mathsf{G}\circ(C^t,D^t)}\right)^\frac{1}{1-t}=
	\left(\frac{\mathsf{G}^2}{\mathsf{G}^{1/t}\circ(C^t,D^t)}\right)^\frac{t}{1-t}\mathsf{G}^{1-\frac{t}{1-t}},$$
	so the left-hand side is a weighted geometric mean of means.
\end{example}
\begin{example}\label{ex:A}
	For arbitrary $0<t<1$ the function
	$$\left(\frac{\mathsf{A}(x,y)}{\mathsf{A}(\mathsf{A}^t(x,y),\mathsf{H}^t(x,y))}\right)^\frac{1}{1-t}$$
	is not a mean.
	
	Suppose for $x>1$ the inequality
	$$\left(\frac{\mathsf{a}(x)}{\mathsf{A}(\mathsf{a}^t(x),\mathsf{h}^t(x))}\right)^\frac{1}{1-t}\leq x.$$
	is valid. This is equivalent to
	$$\frac{\mathsf{a}(x)}{x}\leq \frac{\mathsf{a}^t(x)+\mathsf{h}^t(x)}{2x^t}.$$
	As $x$ tends to infinity, the left-hand side tends to $\frac{1}{2}$, while the limit of the right-hand side is $\frac{1}{2^{1+t}}$, so the assumption was wrong.
\end{example}	
This example can be generalized. 
\begin{example}\label{ex:MKL not means}
If $M, K, L$ are homogeneous means satisfying
$$\lim_{x\to 0+} m(x)>0,\quad\lim_{x\to\infty}\frac{l(x)}{k(x)}=0,\quad\lim_{x\to\infty} \frac{k(x)}{x}<1,$$
and $M$ is symmetric, then for any $0<t<1$ the corresponding function is not a mean, because 
\begin{align*}
\lim_{x\to\infty}\frac{1}{x}
\left(\frac{m(x)}{M(k^t(x),l^t(x))}\right)^{\frac{1}{1-t}}	
&	=\lim_{x\to\infty}
\frac{1}{x}\left(\frac{xm\left(\frac{1}{x}\right)}{k^t(x)m\left(\frac{l^t(x)}{k^t(x)}\right)}\right)^{\frac{1}{1-t}}
\\
	&	=\lim_{x\to\infty}\left(\frac{x}{k(x)}\right)^{\frac{t}{1-t}}>1.
\end{align*}
\end{example}
\bigskip
One can easily verify, that for $p>0$ the functions $\mathsf{STO}_{1,p}, \mathsf{A}$ and $\mathsf{H}$ satisfy the conditions in Example \ref{ex:MKL not means}. \\
Given the fact, that 
	$\lim_{p\to 0+} \mathsf{STO}_{1,p}=\mathsf{L}$ pointwise, 	the next example looks a little bit surprising.

\begin{example}
		The function
		$$N(x,y)=\left(\frac{\mathsf{L}(x,y)}{\mathsf{L}(\mathsf{A}^{{1}/{2}}(x,y),\mathsf{H}^{{1}/{2}}(x,y))}\right)^2$$
		is a mean.
		\bigskip
		
		To show this we need some quite elementary facts. First note
	\begin{equation}
	1<\frac{\mathsf{a}(x)}{\mathsf{h}(x)}<x\quad\text{for}\quad x>1.
	\label{ineq: A/H<x}
	\end{equation}
	By the convexity of  $\sinh$ in $\mathbb{R}_+$, the divided difference $\frac{\sinh u}{u}$ increases. Hence, taking into account that
	\begin{equation}
	\mathsf{L}(x,x^{-1})=\frac{\sinh \log x}{\log x}
	\label{eq:function g}
	\end{equation}
	we conclude that the function $(1,\infty)\ni x\mapsto\mathsf{L}(x,x^{-1})$ is positive and increases.
	
	\bigskip
	To show that $N$ is a mean it is enough to prove that for $x>1$ the inequalities $1<N(x,1)=n(x)<x$ hold. The left one is valid, because $\sqrt{\mathsf{a}(x)\mathsf{h}(x)}=\sqrt{x}$, \eqref{eq:function g} and \eqref{ineq: A/H<x} yield
	\begin{align*}
		\sqrt{n(x)}&=\frac{\mathsf{l}(x)}{\mathsf{L}(\sqrt{\mathsf{a}(x)},\sqrt{\mathsf{h}(x)})}
	\\
	&=\frac{\sqrt{x}\,\mathsf{L}\!\left(\sqrt{x},\frac{1}{\sqrt{x}}\right)}{\sqrt[4]{\mathsf{a}(x)\mathsf{h}(x)}\,\mathsf{L}\!\left(\sqrt[4]{\frac{\mathsf{a}(x)}{\mathsf{h}(x)}},\sqrt[4]{\frac{\mathsf{h}(x)}{\mathsf{a}(x)}}\right)}>\sqrt[4]{x}>1.
	\end{align*}
To 	show the other inequality remind that the power mean of order $1/2$ 
$$\mathsf{A}_{1/2}(x,y)=\left(\frac{\sqrt{x}+\sqrt{y}}{2}\right)^2=\left(\frac{\mathsf{L}(x,y)}{\mathsf{L}\!\left({\sqrt{x}},\sqrt{y}\right)}\right)^2$$ satisfies  $\mathsf{A}_{1/2}\leq \mathsf{A}$. Then we use \eqref{ineq: A/H<x} and the monotonicity of the logarithmic mean to obtain
\begin{align*}
	\sqrt{n(x)}&=\frac{\mathsf{l}(x)}{\mathsf{L}(\sqrt{\mathsf{a}(x)},\sqrt{\mathsf{h}(x)})}=
	\frac{1}{\sqrt{\mathsf{a}(x)}}\frac{\mathsf{l}(x)}{\mathsf{l}\!\left(\sqrt{\frac{\mathsf{h}(x)}{\mathsf{a}(x)}}\right)}\\
	&<\frac{1}{\sqrt{\mathsf{a}(x)}}\frac{\mathsf{l}(x)}{\mathsf{l}\!\left(\frac{1}{\sqrt{x}}\right)}=	\sqrt{\frac{\mathsf{a}_{1/2}(x)}{\mathsf{a}(x)}}\sqrt{x}<\sqrt{x}.
\end{align*}

	\end{example}
	The above examples show that to decide whether a particular function is a mean might be quite complicated. But in fact, we do not need that much. Fortunately we can prove that the functions $C^tN^{1-t}$ and $D^tN^{1-t}$ are means. 
	\begin{theorem}\label{thm:main}
		If $M$ is a symmetric, homogeneous, increasing mean and $C$ and $D$ are arbitrary means then for all $0<t<1$ the functions 
		$$K_t(x,y)=C^t(x,y)N_t^{1-t}(x,y)\quad\text{and}\quad L_t(x,y)=D^t(x,y)N_t^{1-t}(x,y),$$
		where $N_t$ is given by \eqref{eq:def N}, are means and satisfy the invariance equation
		$$M(K_t(x,y),L_t(x,y))=M(x,y).$$
	\end{theorem}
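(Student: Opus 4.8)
The plan is to dispatch the invariance equation first and then spend all the effort on showing that $K_t$ and $L_t$ are means. Writing $\lambda:=N_t^{1-t}(x,y)=\dfrac{M(x,y)}{M(C^t(x,y),D^t(x,y))}$ for fixed $(x,y)$, homogeneity of $M$ gives at once
\[
M(K_t,L_t)=M(\lambda C^t,\lambda D^t)=\lambda\,M(C^t,D^t)=M(x,y),
\]
so the invariance identity is nothing but a restatement of the definition \eqref{eq:def N}. Everything then reduces to proving $\min\{x,y\}\le K_t(x,y),L_t(x,y)\le\max\{x,y\}$.

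For the mean property I would fix $(x,y)$ and, since $M$ is symmetric (so the two arguments of $M$ may always be interchanged), assume without loss of generality that $x\le y$. Put $a:=C(x,y)$ and $b:=D(x,y)$; then $x\le a\le y$ and $x\le b\le y$, and with $\lambda$ as above one has $K_t=\dfrac{a^tM(x,y)}{M(a^t,b^t)}$ and $L_t=\dfrac{b^tM(x,y)}{M(a^t,b^t)}$. Clearing the positive denominator $M(a^t,b^t)$ and then using homogeneity (together with symmetry to reorder the arguments of $M$ whenever it is convenient) rewrites each of the four desired inequalities $x\le K_t\le y$ and $x\le L_t\le y$ as a comparison of the form $M(\cdot,\cdot)\le M(\cdot,\cdot)$; for instance $K_t\le y$ becomes $M(a^tx,a^ty)\le M(yb^t,ya^t)$. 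Monotonicity of $M$ then reduces each such comparison to two coordinatewise inequalities, one of which is a trivial equality and the other an instance of the following elementary claim: \emph{if $x\le u\le y$, $x\le v\le y$ and $0<t<1$, then $xv^t\le u^ty$.} This is seen from the chain $xv^t\le xy^t\le x^ty\le u^ty$, the middle step being $x^{1-t}\le y^{1-t}$, which holds because $x\le y$ and $1-t>0$.

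The only point that needs care — and the reason to lean on the symmetry of $M$ rather than argue naively — is that some of the coordinatewise comparisons one might hope for, such as $a^t\le b^t$, need not hold, since $C$ and $D$ are unrelated means. The remedy is that wherever such a comparison would be forced, one first replaces $M(a^t,b^t)$ by $M(b^t,a^t)$, which produces the harmless pair instead; in this way all four estimates go through uniformly with no case distinction on the sign of $a-b$. Homogeneity of $M$ is precisely what makes the reduction to monotonicity possible, so, as the examples preceding the statement already suggest, it cannot be dropped.
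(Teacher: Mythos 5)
Your proof is correct and rests on the same ingredients as the paper's: homogeneity of $M$ makes the invariance identity immediate from the definition of $N_t$, and the mean property of $K_t,L_t$ is reduced via homogeneity, symmetry and monotonicity of $M$ to the fact that $C(x,y),D(x,y)\in[\min\{x,y\},\max\{x,y\}]$. The only organizational difference is that the paper normalizes to $M\bigl(D^t/C^t,1\bigr)$ and splits into the cases $C\le D$ and $C>D$, whereas you clear denominators and absorb both cases into the single elementary inequality $xv^t\le u^t y$ for $u,v\in[x,y]$ --- a slightly cleaner bookkeeping of essentially the same argument.
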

	\begin{proof}
		Take arbitrary $x,y>0,\ x<y$. Our goal is to show that 
	\begin{equation*}
	x\leq C^t(x,y)N_t^{1-t}(x,y)\leq y \quad\text{ and }\quad x\leq D^t(x,y)N_t^{1-t}(x,y)\leq y.
	\end{equation*}
	We shall prove the left inequalities, the proof of the right ones being similar.
	Note firstly that 
	$$C^t(x,y)N_t^{1-t}(x,y)=\frac{M(x,y)}{M\!\left(\frac{D^t(x,y)}{C^t(x,y)},1\right)}$$
	and consider two cases:\\
	\textsc{Case} $C(x,y)\leq D(x,y)$.\\
	In this case
	$$1\geq\frac{D^t(x,y)}{C^t(x,y)}\leq \frac{D(x,y)}{C(x,y)}\leq \frac{y}{x}$$ 
	and
	$$M\!\left(\frac{D^t(x,y)}{C^t(x,y)},1\right)>1,$$
so
$$y\geq M(x,y)\geq \frac{M(x,y)}{M\!\left(\frac{D^t(x,y)}{C^t(x,y)},1\right)}\geq 
\frac{M(x,y)}{M\!\left(\frac{y}{x},1\right)}=x$$	
	\textsc{Case} $C(x,y)> D(x,y).$\\
	Now 
	$$1\geq \frac{D^t(x,y)}{C^t(x,y)}\geq \frac{D(x,y)}{C(x,y)}\geq \frac{x}{y}$$ 
	and
	$$M\!\left(\frac{D^t(x,y)}{C^t(x,y)},1\right)<1,$$
so
$$x\leq M(x,y)\leq \frac{M(x,y)}{M\!\left(\frac{D^t(x,y)}{C^t(x,y)},1\right)}\leq 
\frac{M(x,y)}{M\!\left(\frac{x}{y},1\right)}=y.$$	
\end{proof}
Clearly, the means $K_t$ and $L_t$ inherit the symmetry and/or homogeneity from $C$ and $D$. The discussion on monotonicity from the previous section applies here as well.

\bigskip
The method described above works very well in case of monotone, symmetric and homogeneous means $M$. Clearly, homogeneity is crucial here, but one can ask whether the monotonicity condition can be relaxed?
\begin{oq}
Consider  a symmetric and homogeneous mean $M$. Do there exists three functions $K,L,N$ and a real number $0<t<1$ such that the functions $K^tN^{1-t}$ and $L^tN^{1-t}$ are means and the equality
$$M(K^tN^{1-t},L^tN^{1-t})=M$$
holds?
\end{oq}
The examples in Section \ref{sec:one more step} show that the next question may be challenging as well.
\begin{oq}
Suppose $M$ is a symmetric, homogeneous and increasing mean. Do there exist non-trivial means $C, D$ and a real number $0<t<1$ such that the function
$$N(x,y)=\left(\frac{M(x,y)}{M(C^t(x,y),D^t(x,y))}\right)^{\frac{1}{1-t}}$$
is a mean?
\end{oq}

\section{Means of $n$ variables}\label{sec:n variables}
It is natural to ask, whether Theorem \ref{thm:main} can be extended to higher dimension.
Assume then $n>2$ and let $M,C_1, \ldots, C_n: \mathbb{R}^n_+\rightarrow\mathbb{R}_+$ be  means with  $M$  symmetric, homogeneous and increasing.
For arbitrary $t\in(0,1)$ the  invariance equation 
$$M\circ(C_1^tN_t^{1-t}, \ldots, C_n^tN_t^{1-t})=M$$ 
can be solved in the same way as in case of means of two variables.
So we obtain
\begin{align*}
N_t=\left(\frac{M}{M\circ(C_1^t, \ldots, C_n^t)}\right)^{\frac{1}{1-t}}.
\end{align*}

The example below shows the answer to our question is negative. 
\begin{example}
Take $M=C_1=\mathsf{A},\ C_2=\dots=C_n=\mathsf{G}$ and let $t\in(0,1)$ be arbitrary. Then the function
$$K_t=\mathsf{A}^t\frac{\mathsf{A}}{\mathsf{A}\circ(\mathsf{A}^t,\mathsf{G}^t,\dots,\mathsf{G}^t)}$$
is not a mean.

Indeed, setting  $x_1=1$ and  $x_2=\ldots=x_n=x$ we obtain
\begin{align*}
\lim_{x\rightarrow\infty}&\frac{K_t(1, x,\ldots, x)}{\mathsf{max}(1, x,\ldots, x)} \\ 
&=\lim_{x\rightarrow\infty} \frac{\left(\frac{1+(n-1)x}{n}\right)^t}{x} \frac{1+(n-1)x}{\left(\frac{1+(n-1)x}{n}\right)^t+(n-1)\left(\sqrt[n]{x^{n-1}}\right)^t} \\ 
&= \lim_{x\rightarrow\infty} \frac{\frac{1}{x} +(n-1)}{1+(n-1)\left(\frac{x\displaystyle^{n-1}}{\left(\frac{1}{n}\left(1+(n-1)x\right)\right)\displaystyle^n}\right)^{\frac{t}{n}}}=n-1>1.
\end{align*}

\end{example}

We want to emphasize that even if the functions $C_i^t(\mathbf{x}) N^{1-t}_t(\mathbf{x})$ may not be means,  they form an $n$-tuple of invariant functions - a fact that can be useful in some applications.
\section{Remark on translative means}\label{Remark on translative means}

A mean $N:\mathbb{R}^{2}\rightarrow \mathbb{R}$ is called \textit{translative%
} if $N\left( x+\tau ,y+\tau \right) =N\left( x,y\right) +\tau $ for all $%
\tau ,x,y\in \mathbb{R}$. \ 

Recall that a bivariate mean on $\mathbb{R}^{2}$ is both homogeneous and
tranlative if, and only if, it is a weighted arithmetic mean \cite[Theorem 1, p. 234]{Aczel}. This fact explains great popularity of the arithmetic means.

\ 

Since a mean $M:\mathbb{R}_{+}^{2}\rightarrow \mathbb{R}_{+}$ is homogeneous
if, and only if, the mean $N:\mathbb{R}^{2}\rightarrow \mathbb{R}$ defined by%
\begin{equation*}
N\left( x,y\right) :=\log M\left( \exp x,\exp y\right) ,
\end{equation*}%
is translative, all the notions, results and questions posed above have
their "translative" counterparts. In particular Theorem \ref{thm:xy
invariance} can be reformulated as follows.

\begin{theorem}
If $N:\mathbb{R}^{2}\mathbb{\rightarrow R}$ is a monotone, translative and
symmetric mean and for $-1<t<1$, the functions $N_{t}$ are given by 
\begin{equation*}
N_{t}(x,y)=\frac{N\left( x,y\right) -N\left( tx,ty\right) }{1-t},\text{ \
\ \ \ \ }x,y\in \mathbb{R}\text{,}
\end{equation*}%
then the functions 
\begin{equation*}
K_{t}(x,y)=tx+\left( 1-t\right) N_{t}(x,y)\quad \text{and}\quad
L_{t}(x,y)=ty+\left( 1-t\right) N_{t}(x,y)
\end{equation*}%
are translative means and satisfy 
\begin{equation*}
N\circ (K_{t},L_{t})=N.
\end{equation*}
\end{theorem}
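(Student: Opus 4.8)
The cleanest route is to recognize this theorem as the image of Theorem~\ref{thm:xy invariance} under the order isomorphism $\log$, exactly as announced at the start of the section. So the plan is: given a monotone, translative, symmetric mean $N\colon\mathbb R^{2}\to\mathbb R$, set
$$M(u,v):=\exp N(\log u,\log v),\qquad u,v>0.$$
By the equivalence recalled above (read in the direction $N\mapsto M$) the mean $M$ is homogeneous; it is obviously symmetric and a mean, and being a composition of the increasing bijections $\exp,\log$ with the monotone $N$ it is increasing. Thus $M$ satisfies the hypotheses of Theorem~\ref{thm:xy invariance}.

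Next I would check that the two constructions correspond under $x=\log u$, $y=\log v$. Substituting $u^{t}=e^{tx}$, $v^{t}=e^{ty}$ into \eqref{eq:def M_t} gives $M_{t}(e^{x},e^{y})=\exp\!\big((N(x,y)-N(tx,ty))/(1-t)\big)=\exp N_{t}(x,y)$, whence $K_{t}^{M}(e^{x},e^{y})=e^{tx}M_{t}^{1-t}(e^{x},e^{y})=\exp\big(tx+(1-t)N_{t}(x,y)\big)=\exp K_{t}(x,y)$, and similarly for $L_{t}$. By Theorem~\ref{thm:xy invariance}, $K_{t}^{M}$ and $L_{t}^{M}$ are homogeneous means with $M\circ(K_{t}^{M},L_{t}^{M})=M$; applying $\log$ — an increasing bijection $\mathbb R_{+}\to\mathbb R$ carrying homogeneity to translativity and the mean inequality to the mean inequality — turns these into the assertions that $K_{t}$ and $L_{t}$ are translative means and that $N\circ(K_{t},L_{t})=N$.

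For a self-contained argument one can instead work directly in the additive setting, copying the proofs of the Lemma and of Theorem~\ref{thm:xy invariance}. Invariance is immediate by construction: translativity gives $N\big(tx+(1-t)N_{t},\,ty+(1-t)N_{t}\big)=N(tx,ty)+(1-t)N_{t}=N(x,y)$. That $N_{t}$ is a mean for $-1<t<1$ follows, for $x<y$, by sandwiching with monotonicity and translativity, $N(tx,ty)+(1-t)x=N\big(x,\,ty+(1-t)x\big)\le N(x,y)\le N\big(tx+(1-t)y,\,y\big)=N(tx,ty)+(1-t)y$, and dividing by $1-t>0$; the case $x>y$ is symmetric and $N_{t}(x,x)=x$. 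For $0\le t<1$ the function $K_{t}$ is then a weighted arithmetic mean of the projection $\mathsf P_{1}$ and the mean $N_{t}$, hence a translative mean; for $-1<t<0$ one reduces to positive parameter through the identity $tx+(1-t)N_{t}(x,y)=-ty+(1+t)N_{-t}(x,y)$, valid because symmetry and translativity give $N(-tx,-ty)=N(tx,ty)-(tx+ty)$. Either way, the only point requiring a little care is the sign of $t$ — absorbed by Theorem~\ref{thm:xy invariance} in the first argument and by the reduction identity in the second — so there is no genuine obstacle; the whole content is the isomorphism $\log$ together with Theorem~\ref{thm:xy invariance}.
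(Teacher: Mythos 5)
Your first route is exactly the paper's argument: the authors give no separate proof, presenting the theorem as the image of Theorem~\ref{thm:xy invariance} under the conjugation $N(x,y)=\log M(e^{x},e^{y})$, and your verification that $M_{t}$, $K_{t}$, $L_{t}$ transform correctly is the whole content. Your supplementary direct argument (translativity for invariance, the monotone sandwich for $N_{t}$, and the identity $tx+(1-t)N_{t}=-ty+(1+t)N_{-t}$ to handle $t<0$) is also correct and is the faithful additive analogue of the paper's Lemma and of the remark following \eqref{eq:def M_t}.
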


\begin{example}
Since the arithmetic mean $\mathsf{A}$ is monotone, translative and
symmetric, applying this result we conclude that $\mathsf{A}\circ
(K_{t},L_{t})=\mathsf{A},$ that $\mathsf{A}$ is invariant with respect to the
mean-type mapping $(K_{t},L_{t}),$ where 
\begin{equation*}
K_{t}(x,y)=\frac{1+t}{2}x+\frac{1-t}{2}y\quad \text{and}\quad L_{t}(x,y)=%
\frac{1-t}{2}x+\frac{1+t}{2}
\end{equation*}%
for $x,y\in \mathbb{R}.$

It is not difficult to check that $\left\{ (K_{t},L_{t}):t\in \left(
-1,1\right) \right\} $ is a family of all weighted arithmetic mean-type
mapping such that $\mathsf{A}\circ (K_{t},L_{t})=\mathsf{A}$.
\end{example}

\bigskip

\end{document}